
\documentclass[]{interact}

\usepackage{epstopdf}

\usepackage[doublespacing]{setspace}

\usepackage{geometry}                
\geometry{letterpaper}                   
\usepackage{graphicx}
\usepackage{amssymb}
\usepackage{epstopdf}
\DeclareGraphicsRule{.tif}{png}{.png}{`convert #1 `dirname #1`/`basename #1 .tif`.png}

\usepackage{amscd}
\usepackage{amsthm}
\usepackage{siunitx}
\usepackage{amsfonts}
\usepackage{amsmath}
\usepackage{latexsym}

\usepackage{epsfig}
\usepackage{graphicx, url}
\usepackage{cite}
\usepackage{color}
\usepackage{setspace}
\usepackage{caption}
\usepackage{subcaption}
\captionsetup{compatibility=false}

\newtheorem{assumption}{Assumption}

\newcommand{\R}{\ensuremath{\mathbb{R}}}
\newcommand{\E}{\ensuremath{\mathbb{E}}}

\def\e{{\mathrm{e}}}

\usepackage[numbers,sort&compress]{natbib}
\bibpunct[, ]{[}{]}{,}{n}{,}{,}

\theoremstyle{plain}
\newtheorem{theorem}{Theorem}[section]
\newtheorem{lemma}[theorem]{Lemma}

\theoremstyle{definition}
\newtheorem{defin}[theorem]{Definition}
\newtheorem{example}[theorem]{Example}

\theoremstyle{remark}

\begin{document}


\title{Self-exciting jump processes and their asymptotic behaviour}

\author{
\name{K.~R. Dahl\textsuperscript{a}\thanks{CONTACT K.~R. Dahl. Email: kristrd@math.uio.no} and H. Eyjolfsson \textsuperscript{b}}
\affil{\textsuperscript{a} Department of Mathematics, University of Oslo, Norway; \textsuperscript{b}Department of Engineering, Reykjavik University, Iceland. heidare@ru.is.}
}

\maketitle

\begin{abstract}
The purpose of this paper is to investigate properties of self-exciting jump processes where the intensity is given by an SDE, which is driven by a finite activity stochastic jump process. The value of the intensity process immediately before a jump may influence the jump size distribution. We focus on properties of this intensity function, and show that the scaling limit of the intensity process equals the strong solution of the square-root diffusion process (Cox–Ingersoll–Ross process) in distribution. As a particular example, we study the case of a linear intensity process and derive explicit expressions for the expectation and variance in this case. Based on this, we show that once an appropriate scaling limit is taken, the resulting process exhibits infinite activity behaviour in distribution.
\end{abstract}

\begin{keywords}
 Self-exciting stochastic processes. Jump processes. Stochastic differential equations. Asymptotic behaviour. Intensity process.
\end{keywords}

\textbf{MSC:} 60G07, 60F99, 60H10.

\section{Introduction}

Self-exciting processes were first studied by Hawkes \cite{Hawkes1}. Initially, the main application was in seismology; the modelling of earthquakes and their aftershocks. Over the last decade various versions of self-exciting processes have been used in financial applications (see e.g. Bacry et al. \cite{Bacry} and Embrechts et al. \cite{Embrechts}), to model group behaviour in social media (see Rizoiu et al. \cite{R}), and for predicting crime and terrorist acts (see Mohler \cite{Mohler} and Lewis et al. \cite{Lewis}). The class of self-exciting stochastic process models has the advantage of being very versatile in applications

In this paper, we investigate properties of self-exciting jump processes. Our definition of self-exciting processes, follows Eyjolfsson and Tj\o stheim \cite{EyjolfssonTjostheim}. The self-exciting process is essentially a counting process, which counts the number of occurred shocks at any given time. The intensity process of the self-exciting process, denoted by $\lambda(t)$, determines the probability of shocks occurring in the infinitesimal interval $(t,t+dt)$ conditioned on the information at time $t$. We assume that this intensity process has Markovian stochastic differential equation (SDE) dynamics. The self-exciting processes considered in this paper differ from Hawkes processes, see Hawkes \cite{Hawkes1} and Hawkes and Oakes \cite{Hawkes2}, because of the stochastic jump size of the self-exciting process may depend on the current value of the intensity process. As noted in Eyjolfsson and Tj\o stheim \cite{EyjolfssonTjostheim}, this kind of self-exciting process is a generalisation of the exponential Hawkes model.


The self-exciting model we present in this paper is a finite activity jump process (like the compound Poisson process) in the sense that in bounded time intervals it only produces a finite amount of jumps. However, as we will discuss in Section \ref{sec: finite_infinite}, when model parameters are chosen in a suitable way, and then passed to a limit one obtains an infinite activity process in the limit which can be thought of as an infinite activity analogue of the self-exciting process. This is similar to how the gamma and inverse Gaussian processes are infinite activity limits of compound Poisson processes. We emphasize that the class of self-exciting processes has the ability to produce periods (clusters) of high activity between periods of low activity, which is something that L\'evy processes can not reproduce.

Jaisson and Rosenbaum \cite{JaissonRosenbaum} derive limit theorems for Hawkes processes which are  nearly unstable. These processes are such that the $\mathcal{L}^1$-norm of their kernel is close to unity. Jaisson and Rosenbaum \cite{JaissonRosenbaum} show that after a rescaling, the nearly unstable Hawkes counting processes asymptotically behave like integrated Cox–Ingersoll–Ross models, and the nearly unstable intensity processes asymptotically behave like Cox–Ingersoll–Ross models. In the same spirit, we prove limit theorems for the self-exciting processes. In Theorem \ref{thm:lam_convergence}, we show that for a fixed time, $t \geq 0$, the scaling limit of the intensity process of the self-exciting process at time $t$ behaves like the Cox–Ingersoll–Ross square-root process in distribution. We moreover prove a similar result for the integrated scaled intensity process, which converges to the integrated Cox–Ingersoll–Ross square-root process in distribution. We reiterate that our self-exciting processes differ from the Hawkes processes in that our processes are specified by a Markovian SDE with stochastic jumps, whereas the Hawkes intensity process has constant jumps and specified by a kernel function with  $\mathcal{L}^1$-norm less than one. 

The paper is structured as follows: In Section \ref{sec: self-exciting} we introduce the framework for self-exciting stochastic processes and illustrate such processes via a numerical example. In Section \ref{sec: finite_infinite}, we show that self-exciting processes can exhibit both finite and infinite activity type behaviour. We also prove that the scaling limit of the intensity process equals the square-root process in distribution. In Section \ref{sec: linear}, we study a particular case where the intensity process of the self-exciting process is assumed to be linear. We derive the expected value and variance of this linear intensity process, as well as the moments of the integrated intensity. Finally, in Section \ref{sec:Conclusion}, we conclude and sketch ideas on further research.

\section{Self-exciting stochastic jump processes}
\label{sec: self-exciting}

Our definition of self-exciting processes, follows Eyjolfsson and Tj\o stheim \cite{EyjolfssonTjostheim}. Essentially, the self-exciting process is a counting process, which counts the number of shocks which have occurred at any given time. Let $(\Omega,\mathcal{F})$ denote a measurable space, and let $\{T_n\}_{n \geq 1}$ be a point process taking values in $\R_+$. The sequence $\{T_n\}_{n \geq 1}$ is assumed non-negative and non-decreasing, i.e. $0 \leq T_1 \leq T_2 \leq \cdots$ holds. The sequence represents times of successive events. The counting process, $N(t)$, associated to the point process,
\begin{equation}\label{def:N}
N(t) := \sum_{n \geq 1} 1_{\{T_n \leq t\}},
\end{equation}
where $t \geq 0$, is the counting process which records all the jumps of the point process. The rate at which the events occur is furthermore dictated by the intensity process, which we define in what follows. We identify a point process with its counting process \eqref{def:N} and let
\begin{equation*}
\mathcal{F}_t^N := \sigma\{N(s) : 0 \leq s \leq t \},
\end{equation*}
where $t \geq 0$. That is, $\{\mathcal{F}_t^N\}_{t \geq 0}$ is the filtration generated by the counting process. Assume that we are given a point process adapted to some filtration $\{\mathcal{F}_t\}_{t \geq 0}$, with $\mathcal{F}_t^N \subset \mathcal{F}_t$ for all $t \geq 0$. Suppose that $N(t)$ admits a c\`adl\`ag  $\{\mathcal{F}_t\}_{t \geq 0}$-adapted, and thus predictable, intensity $\lambda(t)$, such that  
$$
\E\left[\int_0^\infty  f(s) dN(s) \right] = \E\left[\int_0^\infty f(s) \lambda(s)ds \right],
$$
holds for all predictable $f: \Omega \times \R_+ \to [-\infty,\infty]$. Note that this means the the process $t \mapsto N(t) - \lambda(t)$ is a martingale, and that the intensity process $\lambda(t)$ determines the probability of shocks occurring in the infinitesimal interval $(t,t+dt)$ conditioned on $\mathcal{F}_t$. Note in particular that if the intensity is constant, $\lambda(t) = \lambda_0 > 0$, holds  for all $t \geq 0$, then $N(t)$ is a standard homogeneous Poisson process with intensity $\lambda_0$.

We assume that the intensity process admits Markovian SDE dynamics. Each jump has a particular size, which feeds into (i.e. excites) the intensity, and typically raises the intensity immediately after the shock has occurred, although the intensity will then revert back to some mean level in the absence of further shocks. The size of the shock can furthermore influence how much the likelihood of of further shocks is increased (i.e. the level of excitement). Hence, a particularly large shock may for example lead to a high likelihood of aftershocks, whereas a small shock may be less likely to excite the intensity and thus cause further aftershocks. Thus, the model class allows the shocks to vary in size, and the size of each shock determines the level of the corresponding intensity process excitation. If the intensity becomes high enough, a cluster of shocks might appear. 

Consider the stochastic jump process, $U(t)$, given by
\begin{equation}\label{def:U}
U(t) = \sum_{k=1}^{N(t)} X_{k},
\end{equation}
where $\{N(t)\}_{t \geq 0}$ is the counting process \eqref{def:N}, and $\{X_k\}_{k \in \mathbb{N}}$ is a family of random variables, $X_k$ has the probability distribution $\nu(\lambda(T_k-),\cdot)$, for a given family $\{\nu(\lambda,\cdot)\}_{\lambda > 0}$ of probability distributions, and $t- := \lim_{s \uparrow t} s$. Thus we allow the value of the intensity process immediately before the jump to influence the jump size distribution. We introduce the stochastic differential equation (SDE)

\begin{equation}\label{def:sde}
d\lambda(t) = \mu(\lambda(t))dt +  \beta dU(t),
\end{equation}

\noindent $\lambda(0) = \lambda_0$, where $\beta \in \R$ is a constant and we assume that $\mu:\R_+ \to \R$, is Lipschitz continuous. 

\begin{defin}\label{def: self-exciting}
An SDE-driven self-exciting jump process is a stochastic jump process \eqref{def:U} with the intensity $\lambda(t)$, given by the SDE \eqref{def:sde}, with jump-sizes, $X_k$, which follow the probability distribution $\nu(\lambda(T_k-),\cdot)$. Here, $\{\nu(\lambda,\cdot)\}_{\lambda > 0}$ is a family of probability distributions, and $\nu(\lambda,\cdot)$ is supported on $[\lambda_0-\lambda,\infty)$.
\end{defin}
From the above Definition \ref{def: self-exciting}, we see that the jumps feed into the intensity via the jump process $U(t)$. Furthermore the value of the intenisty process immediately before the jump is a parameter in the the jump-size probability distribution. This means that the intensity level prior to a jump can determine the size of the next jump. To prevent an explosion happening in finite time the function $\mu$ must be negative for high values of $\lambda(t)$.

Also note that the self-exciting processes in Definition \ref{def: self-exciting} differ from Hawkes processes, see Hawkes \cite{Hawkes1} and Hawkes and Oakes \cite{Hawkes2}, because of the stochastic jump size modelled via the family $\{X_k\}_{k \in \mathbb{N}}$ of random variables which may depend on the current value of the intensity process. Actually, this kind of self-exciting process generalises the exponential Hawkes model (i.e., the exponential Hawkes process is a special case of Definition \ref{def: self-exciting}), see Eyjolfsson and Tj\o stheim \cite{EyjolfssonTjostheim} for more details.


\medskip

\begin{example} (Simulation of a self-exciting process $U(t)$) 
\label{ex:simulation_self-exciting}

To illustrate, we simulate two paths of the same self-exciting process and plot the intensity $\lambda(t)$ as well as the corresponding self-exciting process $U(t)$. Following Eyjolfsson and Tj{\o}stheim \cite{EyjolfssonTjostheim}, we consider a non-linear intensity process $\lambda(t)$ given as the solution to the following SDE

\begin{equation}
\label{eq: nonlinear_intensity}
\begin{array}{llll}
d \lambda(t) &=& (\alpha + \delta \exp(-\gamma \lambda(t)^2)) (\lambda_0 - \lambda(t))dt + \beta dU(t), \\[\smallskipamount]
\lambda(0) &=& \lambda_0.
\end{array}
\end{equation}

As mentioned in Eyjolfsson and Tj{\o}stheim \cite{EyjolfssonTjostheim}, the speed of mean reversion in the SDE \eqref{eq: nonlinear_intensity} varies between $\alpha + \delta \exp(-\gamma \lambda_0^2)$ for $\lambda(t)=\lambda_0$ and decreases towards $\alpha$ when $\lambda(t)$ increases. The interpretation of this is that in low activity periods, the effect of a jump fades out faster than in high activity periods.

For the simulation, we choose $\lambda_0 = 0.05$, $\alpha=0.1233$, $\beta=0.0399$ and the jumps are simulated from an inverse Gaussian distribution with parameters 1.9389 (mean) and 5.4943 (shape). The parameter values were chosen based on Eyjolfsson and Tj{\o}stheim \cite{EyjolfssonTjostheim}, but the choice of $\lambda_0$ was modified slightly to better display the particular structure of the self-exciting process $U(t)$. The simulation was performed using a thinning algorithm from Ogata \cite{Ogata}.

In Figures \ref{fig:selfexciting_1} and \ref{fig:selfexciting_2} we have plotted two different paths of the self-exciting process $U(t)$ with the corresponding intensity process $\lambda(t)$. Periods with a lot of jump activity in the intensity process correspond to a large increases in the self-exciting process. Periods where there are no jumps in the intensity process correspond to plateaus (i.e., no change) in the corresponding self-exciting process. 

\begin{figure}[h!]
\centering
\begin{subfigure}{.5\textwidth}
  \centering
  \includegraphics[width=\linewidth]{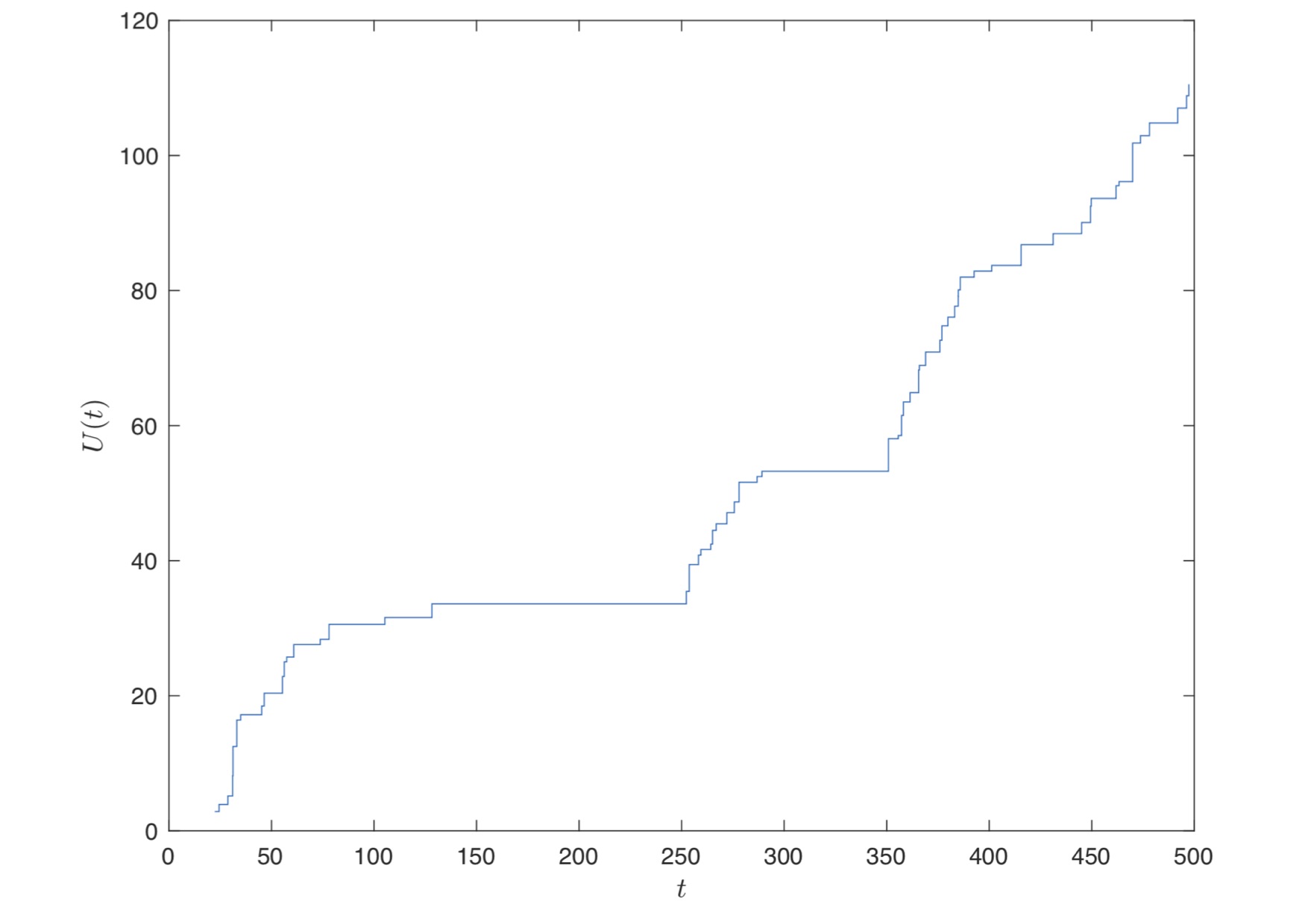}
\end{subfigure}%
\begin{subfigure}{.5\textwidth}
  \centering
  \includegraphics[width=\linewidth]{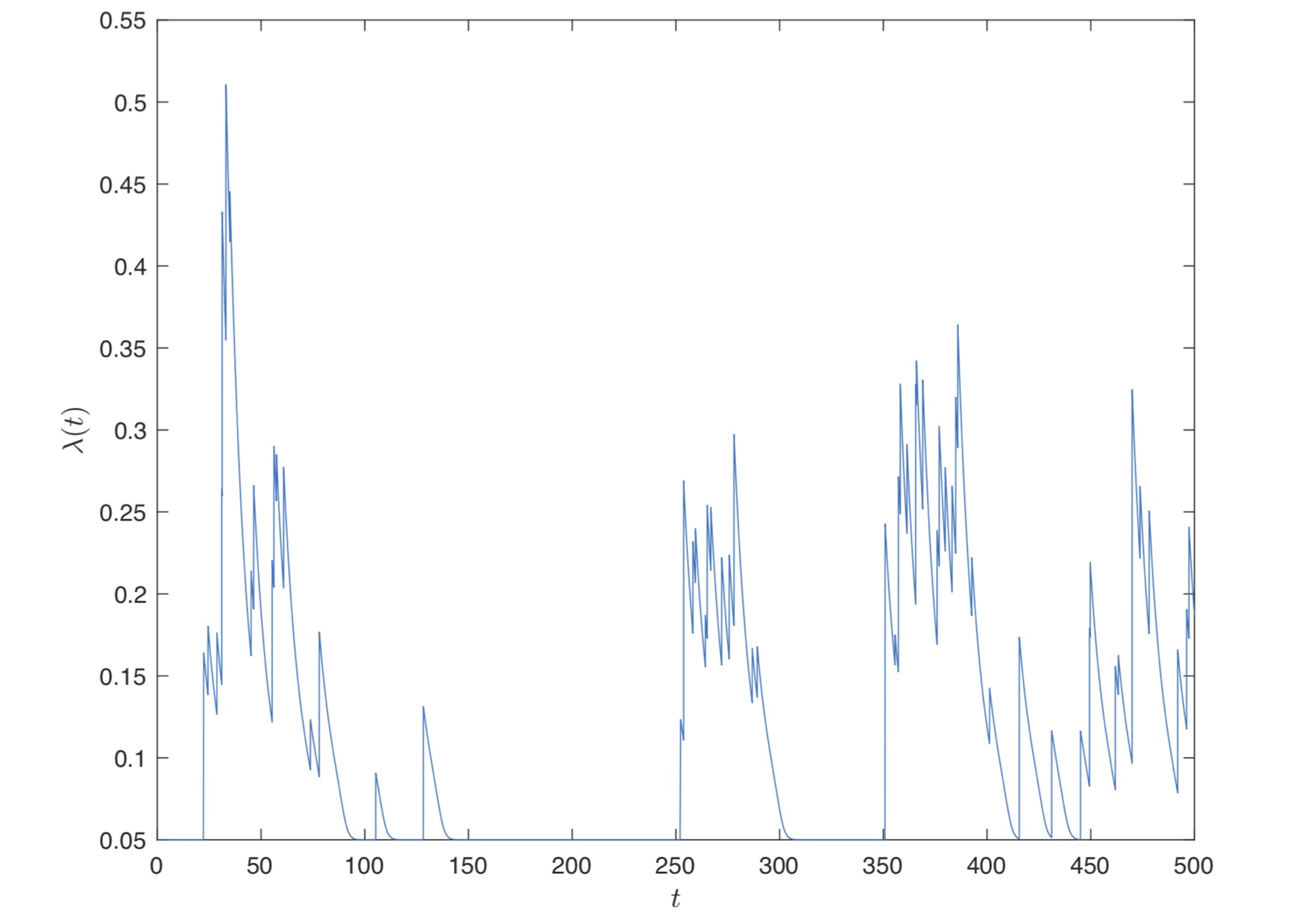}
\end{subfigure}
  \caption{A path of the self-exciting process $U(t)$ with the corresponding intensity $\lambda(t)$.}
  \label{fig:selfexciting_1}
\end{figure}

\begin{figure}[h!]
\centering
\begin{subfigure}{.5\textwidth}
  \centering
  \includegraphics[width=\linewidth]{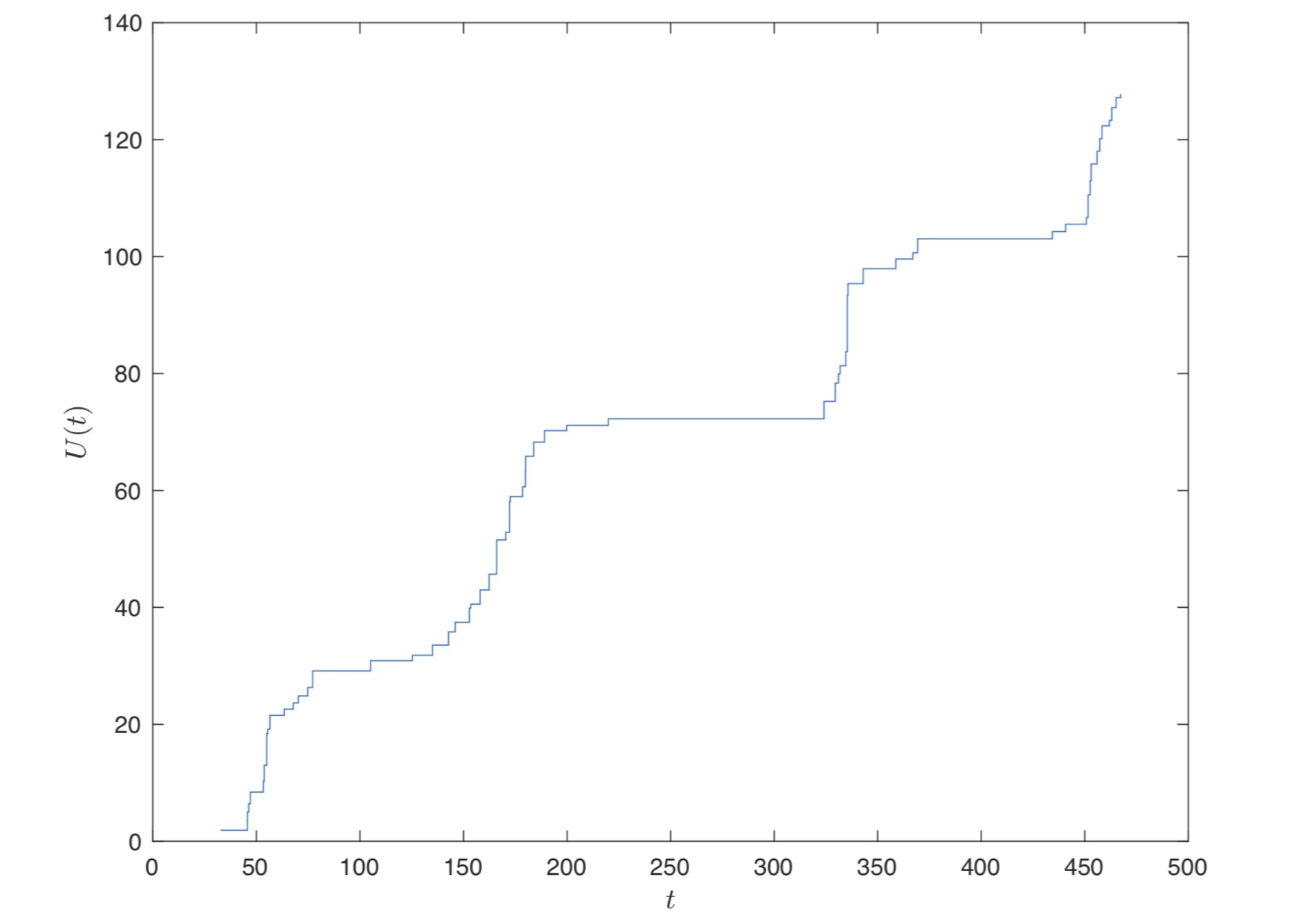}
\end{subfigure}%
\begin{subfigure}{.5\textwidth}
  \centering
  \includegraphics[width=\linewidth]{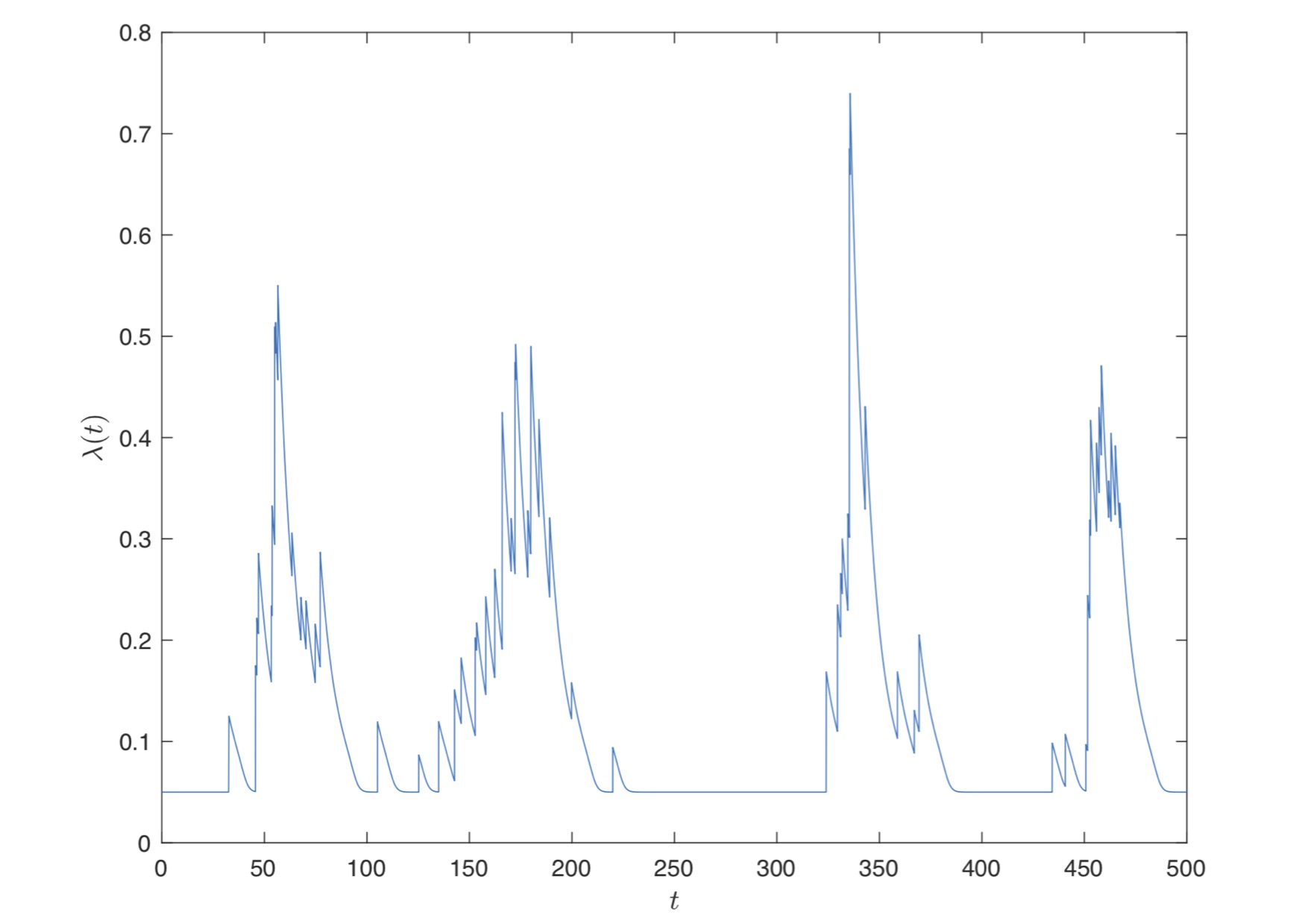}
\end{subfigure}
  \caption{Another path of the self-exciting process $U(t)$ with the corresponding intensity $\lambda(t)$.}
  \label{fig:selfexciting_2}
\end{figure}

\end{example}





\section{Finite and infinite activity of self-exciting processes} 
\label{sec: finite_infinite}


A stochastic jump process is said to have \emph{finite activity} if it has finitely many jumps in finite time. In contrast, an \emph{infinite activity} jump process can have an infinite number of jumps in finite time. As an example, compound Poisson processes have finite activity, while gamma processes and generalized inverse Gaussian processes have infinite activity and jumps that are infinitesimally small (to avoid explosion of the processes). 

A natural question is whether the self-exciting process defined in Section \ref{sec: self-exciting} is of finite or infinite activity. In Eyjolfsson and Tj{\o}stheim \cite{EyjolfssonTjostheim}, conditions are provided which ensure that the counting process $N(t)$ associated with the intensity SDE \eqref{def:sde} does not explode in finite time. This means that the self-exciting process has finitely many jumps on any compact interval, and is thus of finite activity as we have described it above. Let 
\[
\Lambda(t) := \int_0^t \lambda(s) ds.
\]
Then, a condition for the self-exciting process to be of finite activity is:

\begin{lemma}
\label{lemma: finite_activity}
Assume that for all times $T >0$, 
\begin{equation}
\label{eq: lambda}
\Lambda(T) < \infty \mbox{ holds almost surely.}
\end{equation}
Then, the SDE-driven self-exciting process (as defined in Section \ref{sec: self-exciting}) is of finite activity. 
\end{lemma}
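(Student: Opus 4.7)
The plan is to show that for every $T > 0$, $N(T) < \infty$ almost surely, which is exactly the finite-activity condition as stated at the start of Section \ref{sec: finite_infinite}. The key ingredient is the martingale relation $N(t) - \Lambda(t)$ noted after the display in Section \ref{sec: self-exciting}, which says that $N$ is compensated by $\Lambda$. I would combine this with a double stopping that turns the local martingale into an honestly bounded one, so that optional stopping can be applied directly.

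Concretely, fix $T > 0$ and $M > 0$ and introduce the two stopping times
\[
\tau_k := \inf\{ t \geq 0 : N(t) \geq k\}, \qquad \sigma_M := \inf\{ t \geq 0 : \Lambda(t) > M\}.
\]
Since $N$ is c\`adl\`ag adapted and $\Lambda$ is continuous adapted (being an integral of the predictable $\lambda$), these are indeed stopping times. On $[0, T \wedge \tau_k \wedge \sigma_M]$ one has $N \leq k$ and $\Lambda \leq M$, so the stopped process $N^{\tau_k \wedge \sigma_M} - \Lambda^{\tau_k \wedge \sigma_M}$ is a bounded martingale vanishing at $0$. Optional stopping then gives
\[
\E\bigl[N(T \wedge \tau_k \wedge \sigma_M)\bigr] = \E\bigl[\Lambda(T \wedge \tau_k \wedge \sigma_M)\bigr] \leq M.
\]
Letting $k \to \infty$, monotone convergence (with $N(t) := +\infty$ in the case of explosion before $t$) yields $\E[N(T \wedge \sigma_M)] \leq M$, so $N(T \wedge \sigma_M) < \infty$ almost surely.

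To finish, I would unstop in $M$. On the event $\{\Lambda(T) \leq M\} \subseteq \{\sigma_M \geq T\}$ one has $T \wedge \sigma_M = T$, hence $N(T) = N(T \wedge \sigma_M) < \infty$ almost surely on this event. Writing
\[
\{\Lambda(T) < \infty\} = \bigcup_{M \in \N} \{\Lambda(T) \leq M\}
\]
and invoking hypothesis \eqref{eq: lambda}, the left-hand side has full probability, so $\p(N(T) < \infty) = 1$.

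The main delicacy is the passage $k \to \infty$: one must accommodate the possibility that $\sup_k \tau_k < T \wedge \sigma_M$, i.e.\ explosion of $N$ before $T \wedge \sigma_M$. This is precisely the case the uniform bound $\E[N(T \wedge \tau_k \wedge \sigma_M)] \leq M$ rules out, which is why I prefer the direct double-stopping argument over invoking, for example, the Meyer--Papangelou time-change representation $N(\cdot) = \widetilde N(\Lambda(\cdot))$ for a unit-rate Poisson process $\widetilde N$. The latter would give the conclusion in one line via $N(T) = \widetilde N(\Lambda(T))$, but it is a considerably heavier tool than what Lemma \ref{lemma: finite_activity} actually needs.
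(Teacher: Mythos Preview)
Your argument is correct and self-contained. The paper, by contrast, does not actually give an independent proof of this lemma: it simply invokes Assumption~1 and the remarks following it in Eyjolfsson and Tj{\o}stheim~\cite{EyjolfssonTjostheim}, together with the definition of finite activity stated at the top of Section~\ref{sec: finite_infinite}. So the comparison is between a pure citation and a genuine proof. Your double-stopping with $\tau_k$ and $\sigma_M$ is the standard device for showing that a counting process with almost surely finite compensator cannot explode: stopping at $\tau_k$ localizes the compensated process $N-\Lambda$ to an honest martingale, stopping at $\sigma_M$ bounds $\Lambda$, optional stopping gives the uniform estimate $\E[N(T\wedge\tau_k\wedge\sigma_M)]\leq M$, and the inclusion $\{\tau_\infty \leq T\wedge\sigma_M\}\subseteq\{\tau_k\leq T\wedge\sigma_M\}$ together with $\p(\tau_k\leq T\wedge\sigma_M)\leq M/k$ rules out explosion before $T\wedge\sigma_M$. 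This is more elementary and more transparent than the paper's one-line deferral. The Meyer--Papangelou time-change you mention at the end is the other common route in the point-process literature and would also dispatch the lemma in one line; your judgement that it is heavier machinery than the statement warrants is reasonable.
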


\begin{proof}
The Lemma follows from Assumption 1, and the comments thereafter, in Eyjolfsson and Tj{\o}stheim \cite{EyjolfssonTjostheim} combined with the definition of finite activity above.
\end{proof}

If the intensity process $\lambda(t)$ in equation \eqref{def:sde} does not satisfy assumption \eqref{eq: lambda} of Lemma \ref{lemma: finite_activity}, then the counting process $N(t)$ may explode in finite time. In this case, the self-exciting process may have infinite activity. By choosing a jump size distribution, $\nu$, with positive jumps, and an appropriate drift term $\mu$ which does not drive the intensity sufficiently fast downwards between jumps in the SDE \eqref{def:sde}, we can ensure an explosion of the number of jumps $N(t)$ in finite time. However, for such a model to be meaningful we must ensure simultaneously that the jump-sizes, $X_k$, are infinitesimal like they are in infinite activity L\'evy processes. Otherwise, the jump process itself will explode. In the current section we show that by taking an appropriate scaling limit the resulting limit of an SDE-driven self-exciting processes is an infinite activity process in distribution. 

\subsection{Infinite activity self-exciting processes}
\label{sec: inf activity}
In this section we discuss infinite activity self-exciting processes and how they can be obtained as limits of finite activity self-exciting processes. If the parameters which govern the dynamics of the SDE intensity are changed in a way such that $\Lambda(t)$ increases, while the jump-sizes, $X_1,X_2,\ldots$, become smaller and smaller simultaneously, then, as we pass to the limit, we can ensure that in the limit the self-exciting jump process, $U(t)$, has infinite activity. 

Essentially there are two non-trivial possibilities. The first one is that an appropriately scaled version of the stochastic intensity, converges in law to a non-random positive measure. The second option of interest is that the scaled stochastic intensity converges in law to a stochastic process. In what follows we study conditions under which the scaled stochastic intensity process converges in law to a stochastic process.

We study the asymptotic properties of the intensity process as we increase the jump intensity and decrease the jump size distribution simultaneously. To that end, consider a self-exciting process which depends on a parameter, $k \geq 1$, i.e. a process with an intensity,
\begin{equation}\label{lam_k}
d\lambda_k(t) = \mu_k(\lambda_k(s))dt + \beta_kdU_k(t), 
\end{equation}
where $\lambda_k(0) = \lambda_{0k}$ and iid non-negative jump size distribution with (joint) cumulative probability distribution function $F_k$. Suppose furthermore that $\{a_k\}$, $k \geq 1$ is a sequence such that $a_k > 0$ for all $k \geq 1$ and $\lim_{k \to \infty} a_k = 0$. We study the behaviour of the scaled intensity process 
\begin{equation}\label{lam_hat_k}
\hat \lambda_k(t) := a_k\lambda_k(t).
\end{equation}
Now let $m_{k,n}(t) := E[\lambda_k^n(t)]$ and $\hat m_{k,n}(t) := E[\hat \lambda_k^n(t)]$.  
In what follows we shall make the following assumption.

\begin{assumption}\label{ass:stability}
Given the intensity processes \eqref{lam_k} and the corresponding scaled processes \eqref{lam_hat_k}, suppose that $\lim_{k \to \infty} a_k \lambda_{0k} = 0$, there is a sequence of non-negative functions $\{g_k\}$ such that $a_kg_k(\lambda)$ is bounded for each $k \geq 1$ and $\lim_{k \to \infty} a_kg_k(\lambda) = c_0 \in \R$, where the convergence is uniform in $\lambda$, and it holds that 
$$
\mu_k(\lambda) + \beta_k E[X_k]\lambda = g_{k}(\lambda) - c_1\lambda
$$
where $c_1 > 0$ and for each $k \geq 1$, the jump sizes are non-negative, independent and identically distributed, with finite moments. Suppose furthermore that  
$$
\lim_{k \to \infty} a_k^{j-1} \beta_k^{j}E[X_k^{j}] = \begin{cases}
c_2 & \text{if } j = 2\\  
0 & \text{if } j > 2,
\end{cases} 
$$
where the random variable $X_k$ represents a jump in $U_k(t)$.
\end{assumption}

According to Eyjolfsson and Tj\o stheim \cite{EyjolfssonTjostheim} the generator of the intensity process $\lambda(t)$ is given by 
$$
(\mathcal{A}f)(\lambda) = \mu(\lambda) f'(\lambda) + \lambda \int (f(\lambda + \beta x) - f(\lambda))\nu(\lambda,dx),
$$
whenever $f$ is in the domain of the generator. Furthermore, if $f$ is in the domain of the generator, then the Dynkin formula is verified, i.e.,
$$
E[f(\lambda(t))] = f(\lambda) + E[\int_0^t \mathcal{A}f(\lambda(r)) dr],
$$
when it it assumed that the initial value is $\lambda(0) = \lambda$. Letting $f(\lambda) = \lambda^n$, where $n \geq 1$, and employing the binomial theorem, we obtain that 
$$
(\mathcal{A}f)(\lambda) = n(\mu(\lambda) + \beta E[X]\lambda)\lambda^{n-1} + \sum_{j=0}^{n-2} {n \choose j} \beta^{n-j} E[X^{n-j}] \lambda^{j+1}.
$$
So, if $m_n(t) = E[\lambda^n(t)]$ denotes the $n$th moment of the intensity $\lambda(t)$,  an application of Dynkin's formula and Fubini yields
\begin{align} \label{def:moment}
\begin{split}
m_n(t) &= \lambda^n + \int_0^t nE[(\mu(\lambda(s)) + \beta E[X]\lambda(s))\lambda^{n-1}(s)]ds  \\
 \ \ &+ \sum_{j=0}^{n-2} {n \choose j} \beta^{n-j} E[X^{n-j}] \int_0^t m_{j+1}(s) ds, 
\end{split}
\end{align}
where the sum is dropped when $n=1$. 

In the following lemma, we show that the $n$th moment of the scaled intensity process, $\hat m_{k,n}(t) = E[\hat \lambda_k^n(t)]$, is bounded by an $n$th degree polynomial. Later, this result will be used to prove that the scaling limit of the intensity equals the square-root process in distribution.

\begin{lemma}\label{lem:finite_moments}
For each $n \geq 0$, the moment $\hat m_{k,n}(t)$ is bounded by an $n$th degree polynomial which is independent of $k \geq 1$.
\end{lemma}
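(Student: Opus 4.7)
The plan is to apply the moment recursion \eqref{def:moment} to $\lambda_k$, rescale it to a recursion for $\hat m_{k,n}$, and then induct on $n$. Starting from \eqref{def:moment} with $\mu_k(\lambda)+\beta_k E[X_k]\lambda = g_k(\lambda) - c_1\lambda$ substituted in from Assumption \ref{ass:stability}, and multiplying through by $a_k^n$, I would regroup the scaling factors using $\hat\lambda_k^m = a_k^m\lambda_k^m$ and the identity $a_k^n m_{k,j+1}(s) = a_k^{n-j-1}\hat m_{k,j+1}(s)$. The jump prefactors then collect into the combination $a_k^{n-j-1}\beta_k^{n-j}E[X_k^{n-j}]$ whose limit is pinned down by Assumption \ref{ass:stability}, while the drift term becomes $nE[(a_kg_k(\lambda_k(s)))\hat\lambda_k^{n-1}(s)] - nc_1\int_0^t \hat m_{k,n}(s)\,ds$. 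Discarding the non-positive last term yields an inequality of the form
\begin{equation*}
\hat m_{k,n}(t) \leq (a_k\lambda_{0k})^n + nC \int_0^t \hat m_{k,n-1}(s)\,ds + \sum_{j=0}^{n-2}\binom{n}{j} D_{n,j} \int_0^t \hat m_{k,j+1}(s)\,ds,
\end{equation*}
where $C := \sup_{k,\lambda} a_k g_k(\lambda)$ and $D_{n,j} := \sup_k a_k^{n-j-1}\beta_k^{n-j} E[X_k^{n-j}]$ are finite and independent of $k$: each $a_k g_k$ is bounded and $a_kg_k \to c_0$ uniformly, while the jump-moment combinations have finite limits ($c_2$ or $0$) by Assumption \ref{ass:stability}. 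The initial term $(a_k\lambda_{0k})^n$ is likewise uniformly bounded in $k$ since $a_k\lambda_{0k}\to 0$.

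The induction is then immediate. The base case $n=0$ is trivial since $\hat m_{k,0}\equiv 1$, and the case $n=1$ (empty sum) gives $\hat m_{k,1}(t)\leq a_k\lambda_{0k} + Ct$, a $k$-independent polynomial of degree $1$. For the inductive step, assuming $k$-independent polynomial bounds $P_j$ of degree $j$ on $\hat m_{k,j}$ for all $j\leq n-1$, substituting these bounds into the displayed inequality and integrating termwise produces a $k$-independent polynomial $P_n$ of degree $n$ bounding $\hat m_{k,n}$.

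The main obstacle is the careful bookkeeping required to match the scaled moment recursion to the three controlled quantities in Assumption \ref{ass:stability}; once the powers of $a_k$, $\beta_k$, and the moments of $X_k$ are paired correctly, the inequality falls out and the induction is mechanical. A minor subtlety worth noting is the need to absorb $a_k$ into $g_k$ \emph{before} taking the expectation, so that the uniform bound on $a_kg_k$ can be pulled outside the expectation in $E[(a_kg_k(\lambda_k(s)))\hat\lambda_k^{n-1}(s)]$; otherwise the uniform-in-$k$ character of the bound is lost.
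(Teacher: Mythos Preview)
Your proposal is correct and follows essentially the same approach as the paper: apply the moment recursion \eqref{def:moment}, substitute Assumption~\ref{ass:stability}, drop the non-positive $-c_1$ contribution, bound $a_kg_k$ uniformly, and induct on $n$. Your write-up is in fact more explicit than the paper's---you track the initial term $(a_k\lambda_{0k})^n$ and the jump-sum constants $D_{n,j}$ separately, whereas the paper's argument absorbs these into a terse ``by induction''---but the underlying mechanism is identical.
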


\begin{proof}
By \eqref{def:moment} it holds that
\begin{align*}
\hat m_{k,n}(t) &= a_k^n\left( \lambda_{0k}^n + \int_0^t nE[(\mu_k(\lambda_k(s)) + \beta_k E[X_k]\lambda_k(s))\lambda_k^{n-1}(s)]ds \right. \\
 \ \  &+ \left.\sum_{j=0}^{n-2} {n \choose j} \beta_k^{n-j} E[X_k^{n-j}] \int_0^t m_{k,j+1}(s) ds \right).
\end{align*}
Notice that since an intensity is always non-negative it holds for any $n \geq 1$ that  
\begin{align*}
E[a_k^n (g_k(\lambda_k(s)) - c_1\lambda_k(s)\lambda_k^{n-1}(s)]  &\leq E[a_k g_k(\lambda_k(s))\hat \lambda_k^{n-1}(s)] \\
&\leq (\|a_kg_k - c_0\|_\infty + c_0)\hat m_{k,n-1}(s) ,
\end{align*}
where $\|f\|_\infty = \sup_x |f(x)|$ denotes the sup norm. Hence, by Assumption~\ref{ass:stability} it follows that $\hat m_{k,1}(t) \leq (\sup_k\|a_kg_k - c_0\|_\infty + c_0)t$, and by induction it follows that $\hat m_{k,n}(t)$ is bounded by a $n$th degree polynomial which is independent of $k$. 
\end{proof}

We now show that, given our assumption, for each $t \geq 0$ the scaling limit of the intensity process \eqref{lam_hat_k} as $k \to \infty$ in distribution is the square-root process which is given by the strong solution of the SDE 
\begin{equation}\label{def:CIR_model}
dY(t) = (c_0 - c_1Y(t))dt + \sqrt{c_2 Y(t)}dB(t),
\end{equation} 
where $Y(0) = 0$ and $B(t)$ denotes Brownian motion. 

\begin{theorem}\label{thm:lam_convergence}
For any $t \geq 0$, it holds that $\hat \lambda_k(t) \to Y(t)$, as $k \to \infty$ in distribution, where $Y(t)$ is given by \eqref{def:CIR_model}.
\end{theorem}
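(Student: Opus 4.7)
My plan is to use the method of moments, leveraging the $k$-independent polynomial bounds provided by Lemma~\ref{lem:finite_moments}. First, I would specialise the moment identity \eqref{def:moment} to the process $\lambda_k$ and substitute the decomposition $\mu_k(\lambda)+\beta_k E[X_k]\lambda = g_k(\lambda)-c_1\lambda$ from Assumption~\ref{ass:stability}. Multiplying through by $a_k^n$ and regrouping the factors of $a_k$ so as to form the scaled moments $\hat m_{k,n}$ and the scaled coefficients $a_k^{n-j-1}\beta_k^{n-j}E[X_k^{n-j}]$ would yield the linear Volterra equation
\begin{align*}
\hat m_{k,n}(t) &= (a_k\lambda_{0k})^n + n\int_0^t E[a_k g_k(\lambda_k(s))\hat\lambda_k^{n-1}(s)]\,ds - nc_1\int_0^t \hat m_{k,n}(s)\,ds \\
&\quad + \sum_{j=0}^{n-2}\binom{n}{j}\bigl(a_k^{n-j-1}\beta_k^{n-j}E[X_k^{n-j}]\bigr)\int_0^t \hat m_{k,j+1}(s)\,ds.
\end{align*}

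Next, I would argue by induction on $n$ that $\hat m_{k,n}(t)\to \hat m_n(t)$ as $k\to\infty$ for every $t\ge 0$, where $\hat m_n(t):=E[Y^n(t)]$. The base case $n=1$ is immediate: the sum is empty, $a_k\lambda_{0k}\to 0$, and $a_kg_k\to c_0$ uniformly, so the equation passes in the limit to $\hat m_1(t) = c_0 t - c_1\int_0^t\hat m_1(s)\,ds$, which is the first-moment equation of $Y(t)$. In the inductive step, Assumption~\ref{ass:stability} forces every summand with $n-j>2$ to vanish in the limit, leaving only the $j=n-2$ term, which converges to $\binom{n}{2}c_2\int_0^t \hat m_{n-1}(s)\,ds$. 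The first integral would be treated by writing $a_kg_k(\lambda)=c_0+(a_kg_k(\lambda)-c_0)$: the dominant part gives $c_0\hat m_{k,n-1}(s)\to c_0\hat m_{n-1}(s)$ by the inductive hypothesis, while the remainder is controlled uniformly by $\|a_kg_k-c_0\|_\infty\cdot\hat m_{k,n-1}(s)\to 0$. The polynomial bound of Lemma~\ref{lem:finite_moments} supplies the $k$-independent domination needed to pass the limit through the outer time integrals, and uniqueness of solutions to the remaining linear Volterra equation in $\hat m_{k,n}$ closes the induction.

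I would then identify the limiting recursion with the moment system of $Y(t)$. Applying It\^o's formula to $Y^n(t)$ and taking expectations produces
\begin{equation*}
\hat m_n(t) = nc_0\int_0^t\hat m_{n-1}(s)\,ds - nc_1\int_0^t\hat m_n(s)\,ds + \binom{n}{2}c_2\int_0^t\hat m_{n-1}(s)\,ds,
\end{equation*}
which is precisely the limiting system above; since this triangular linear Volterra system has a unique solution for each $n$ given $\hat m_0\equiv 1$ and $\hat m_n(0)=0$ for $n\ge 1$, the limits coincide with the moments of $Y(t)$. Because $Y(t)$ has a non-central chi-square transition law, it admits finite exponential moments and its distribution is therefore determined by its moments; the classical method of moments then yields $\hat\lambda_k(t)\to Y(t)$ in distribution.

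The main obstacle is the justification of the interchange of $\lim_{k\to\infty}$ with both the expectation and the outer time integral in the first integral term of the inductive step. The uniform (not merely pointwise) convergence $a_kg_k\to c_0$ postulated in Assumption~\ref{ass:stability} is essential to control the inner expectation, while the $k$-independent polynomial moment bound from Lemma~\ref{lem:finite_moments} provides the dominating function required to apply dominated convergence to the outer integral.
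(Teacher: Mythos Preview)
Your proposal is correct and follows essentially the same approach as the paper: derive a moment recursion from \eqref{def:moment}, pass to the limit using Lemma~\ref{lem:finite_moments} and Assumption~\ref{ass:stability} to obtain the moment system of the square-root process, and conclude by the method of moments via moment determinacy of $Y(t)$. The paper organises the limit computation directly into an ODE for $\lim_k \hat m_{k,n}(t)$ and cites Dufresne for the convergence of the MGF series, whereas you structure it as an explicit induction through a linear Volterra equation and invoke the non-central chi-square law of $Y(t)$; these are cosmetic differences, and in fact your explicit splitting $a_kg_k = c_0 + (a_kg_k - c_0)$ handles the interchange of limit and expectation more transparently than the paper does.
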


\begin{proof}
According to Assumption~\ref{ass:stability} and Lemma~\ref{lem:finite_moments} we may apply \eqref{def:moment} and the dominated convergence theorem to conclude that
\begin{align*}
\lim_{k \to \infty} \hat m_{k,n}(t) &= \lim_{k \to \infty} a_k^n\left( \lambda_{0k}^n + \int_0^t nE[(\mu_k(\lambda_k(s)) + \beta_k E[X_k]\lambda_k(s))\lambda_k^{n-1}(s)]ds \right. \\
 \ \  &+ \left.\sum_{j=0}^{n-2} {n \choose j} \beta_k^{n-j} E[X_k^{n-j}] \int_0^t m_{k,j+1}(s) ds \right) \\
&=  n \int_0^t E[ \lim_{k \to \infty} \left(  a_kg_k(\lambda_k(s)) -c_1 \hat \lambda_{k}(s) \right) \hat \lambda_k^{n-1}(s)]ds  \\
 \ \  &+  \frac{n(n-1)}{2} c_2 \int_0^t \lim_{k \to \infty}\hat m_{k,n-1}(s) ds.
\end{align*}

Hence, by an application of the fundamental theorem of the calculus, it follows that for each $n \geq 1$, the $n$th moment of the limit of the scaled intensity, $\lim_{k \to \infty} \hat \lambda_k(t)$, solves the ODE
\begin{align*}
y_n ' = -nc_1y_n + \left(nc_0 + \frac{n(n-1)}{2}c_2\right)y_{n-1},
\end{align*}
where $y_n(0) = 0$. 

Now, by applying  It\^o's lemma with $f(x) = x^n$ to the stochastic process $Y(t)$ it follows that the $n$th moment of $Y(t)$ also verifies the above ODE, for every $n \geq 1$. Moreover, Dufresne~\cite{Dufresne} shows that the series 
$$
\sum_{n=0}^\infty \frac{s^n}{n!} E[Y^n(t)]
$$
converges when $s$ is small enough, so that the moment generating function (MGF) of $Y(t)$ can be obtained in this way by evaluating the above series. Thus, we have shown that the MGF of $\lim_{k \to \infty} \hat \lambda_k(t)$ equals the MGF of $Y(t)$ in a neighbourhood around $0$, and hence (see e.g. section 30 in Billingsley~\cite{Billingsley}) they are equal in distribution.
\end{proof}

A similar result holds for the integrated intensity process.

\begin{theorem}\label{thm:int_lam_convergence}
For any $t \geq 0$, it holds that $\int_0^t \hat \lambda_k(s) ds \to \int_0^t Y(s)ds$, as $k \to \infty$ in distribution, where $Y(t)$ is given by \eqref{def:CIR_model}.
\end{theorem}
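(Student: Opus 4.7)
The plan is to adapt the moment-convergence strategy of Theorem~\ref{thm:lam_convergence} to the two-dimensional Markov process $(\hat\lambda_k(t), W_k(t))$, where $W_k(t) := \int_0^t \hat\lambda_k(s)\,ds$. Introducing the joint moments $M_{k,m,n}(t) := E[\hat\lambda_k^m(t) W_k^n(t)]$, Dynkin's formula applied to $h(\lambda,w) = \lambda^m w^n$ for the extended generator of $(\lambda_k, W_k)$, followed by the same rearrangement that combines drift and the $j=m-1$ jump term used in the proof of Theorem~\ref{thm:lam_convergence}, yields the integral equation
\begin{align*}
M_{k,m,n}(t) &= (a_k\lambda_{0k})^m\, 1_{\{n=0\}} + m\int_0^t E\bigl[(a_k g_k(\lambda_k(s)) - c_1 \hat\lambda_k(s))\hat\lambda_k^{m-1}(s) W_k^n(s)\bigr]ds \\
&\quad + n\int_0^t M_{k,m+1,n-1}(s)\,ds + \sum_{j=0}^{m-2}\binom{m}{j} a_k^{m-j-1}\beta_k^{m-j} E[X_k^{m-j}] \int_0^t M_{k,j+1,n}(s)\,ds,
\end{align*}
the new term $n\int_0^t M_{k,m+1,n-1}(s)\,ds$ arising from the deterministic dynamics $dW_k = \hat\lambda_k\,dt$.

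The next step is to extend Lemma~\ref{lem:finite_moments} to a uniform-in-$k$ polynomial bound on $M_{k,m,n}(t)$. By Fubini,
\[
E[W_k^n(t)] = n!\int_{0 \leq s_1 \leq \cdots \leq s_n \leq t} E[\hat\lambda_k(s_1)\cdots\hat\lambda_k(s_n)]\,ds_1 \cdots ds_n,
\]
and H\"older's inequality together with the single-time bounds from Lemma~\ref{lem:finite_moments} provides a polynomial bound on this quantity independent of $k$; the same reasoning bounds $M_{k,m,n}(t)$. Under Assumption~\ref{ass:stability}, the coefficients in the integral equation above converge as $k \to \infty$ to those one would obtain by applying It\^o's formula to $Y^m(\int_0^{\cdot} Y(s)\,ds)^n$ with $Y$ solving \eqref{def:CIR_model}: the $j \leq m-3$ jump terms vanish, the $j=m-2$ term contributes $\binom{m}{2}c_2\, M_{\infty,m-1,n}$, and the combined drift/$j=m-1$ contribution gives $m(c_0\, M_{\infty,m-1,n} - c_1\, M_{\infty,m,n})$. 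Dominated convergence then permits passing $k \to \infty$ inside the integral. A nested induction, outer on $n$ (with base $n=0$ supplied by Theorem~\ref{thm:lam_convergence}) and inner on $m$ (using that $M_{\infty,m+1,n-1}$ is already known from the outer step), identifies $\lim_k M_{k,m,n}(t) = E[Y^m(t)(\int_0^t Y(s)\,ds)^n]$ for every $m,n \geq 0$.

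Specializing to $m = 0$ yields convergence of all moments of $W_k(t)$ to those of $\int_0^t Y(s)\,ds$. By the affine structure of the CIR process, the MGF of $\int_0^t Y(s)\,ds$ is analytic in a neighbourhood of zero (cf.~Dufresne~\cite{Dufresne}), so moment convergence implies MGF convergence on that neighbourhood, and hence convergence in distribution via section~30 of Billingsley~\cite{Billingsley}, in exact parallel to the concluding step in the proof of Theorem~\ref{thm:lam_convergence}. The main obstacle lies in the second step: the integral equation couples $M_{k,m,n}$ to $M_{k,m+1,n-1}$ and to $M_{k,j+1,n}$ for $j \leq m-2$ at the same time $s$, so the induction must be arranged so that every forcing is under control before one addresses $(m,n)$. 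Doing outer induction on $n$ (so that $M_{k,m+1,n-1}$ has strictly smaller $n$) and inner induction on $m$ resolves this cleanly, but the accompanying uniform-in-$k$ bookkeeping via Lemma~\ref{lem:finite_moments} and H\"older is delicate.
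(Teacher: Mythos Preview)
Your proposal is correct and follows essentially the same strategy as the paper: introduce the joint moments $E[\hat\Lambda_k^m\hat\lambda_k^n]$ (the paper's index convention is the transpose of yours), derive an integral/ODE recursion via It\^o--Dynkin, bound the moments uniformly in $k$, pass to the limit by dominated convergence under Assumption~\ref{ass:stability}, and match the resulting ODE system with that of $E[(\int_0^tY)^mY^n]$ before concluding via the known Laplace transform of $\int_0^tY(s)ds$ and Billingsley. The one substantive difference is the uniform moment bound: the paper controls $E[\hat\Lambda_k^n(t)]$ by $t^n\sup_{s\le t}\hat m_{k,n}(s)$ through an upper semi-continuity argument and then applies Cauchy--Schwarz, whereas your Fubini--H\"older route via $E[\hat\lambda_k(s_1)\cdots\hat\lambda_k(s_n)]\le\prod_i\hat m_{k,n}(s_i)^{1/n}$ is more direct and avoids the exchange of supremum and expectation, which is the more delicate point in the paper's version.
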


\begin{proof}
First we show that the moments of the integrated intensity, $\hat \Lambda_k(t) = \int_0^t \hat \lambda_k(s) ds$, are finite independently of the parameter $k \geq 1$. Note that 
$$
\hat \Lambda_k^n(t) = \left(\int_0^t \hat \lambda_k(s) ds\right)^n  \leq t^n \left( \sup_{s \in [0,t]} \hat \lambda_k(s) \right)^n.
$$
Since the jumps are non-negative, it follows that the intensity, $s \mapsto \hat \lambda_k(s)$ (and thus $s \mapsto \hat \lambda_k^n(s)$) is a.s. upper semi-continuous. A property of upper semi-continuous functions is that it attains its supremum on a compact set. A consequence of this is that for a fixed $\omega \in \Omega$ and $\epsilon > 0$, the set 
$$
A_\epsilon = \{s \in [0,t] : \hat \lambda_k^n(s) \geq \sup_{r \in [0,t]} \hat \lambda_k^n(r) - \epsilon \}
$$ 
is not empty and closed. Hence, since $\epsilon > 0$ is arbitrary, it must hold that $\sup_{s \in [0,t]}E[ \hat \lambda_k^n(s)] \geq E[ \sup_{s \in [0,t]} \hat \lambda_k^n(s)]$, and conversely it clearly holds that $\sup_{s \in [0,t]}E[ \hat \lambda_k^n(s)] \leq E[ \sup_{s \in [0,t]} \hat \lambda_k^n(s)]$. Similar arguments can moreover be employed to show that $(\sup_{s \in [0,t]} \hat \lambda_k(s))^n = \sup_{s \in [0,t]} \hat \lambda_k^n(s)$. It follows that 
\begin{align*}
E[\hat \Lambda_k^n(t)] \leq t^n E[\left( \sup_{s \in [0,t]} \hat \lambda_k(s) \right)^n] = t^n \sup_{s \in [0,t]} \hat m_{k,n}(s),
\end{align*}
so, according to Lemma~\ref{lem:finite_moments} it holds that $E[\hat \Lambda_k^n(t)] < C(t) < \infty$ where $C(t) > 0$ is independent of $k$.

Suppose that $m,n \geq 0$. According to It\^o's formula (see Theorem II.33 in Protter~\cite{Protter}) applied to the product $t \mapsto \hat \Lambda_k^m(t) \hat \lambda_k^n(t)$ it holds that 
\begin{align*}
&\hat \Lambda_k^m(t) \hat \lambda_k^n(t) = \int_0^t  \hat \lambda_k^n(s) d(\hat \Lambda_k^m(s)) + \int_0^t  \hat \Lambda_k^m(s) d(\hat \lambda_k^n(s)) \\
\ &+ \sum_{0 \leq s \leq t} \{ \hat \Lambda_k^m(s) \hat \lambda_k^n(s) - \hat \Lambda_k^m(s-) \hat \lambda_k^n(s-) - \hat \lambda_k^m(s-) \Delta \hat \Lambda_k^n(s) - \hat \Lambda_k^m(s-) \Delta \hat \lambda_k^n(s)\} \\
&= \int_0^t m \hat \Lambda_k^{m-1}(s) \hat \lambda_k^{n+1}(s) ds + \int_0^t  \hat \Lambda_k^m(s) n \hat \lambda_k^{n-1}(s) a_k \mu_k(\lambda_{k}(s))ds \\
\ &+ \sum_{0 \leq s \leq t} \hat \Lambda_k^m(s)\left(\hat \lambda_k^n(s) - \hat \lambda_k^n(s-)\right).
\end{align*}
Now, let $M_{k,m,n}(t):= E[\hat \Lambda_k^m(t) \hat \lambda_k^n(t)]$, for $m,n \geq 0$. Then, according to the Cauchy-Schwarz inequality it holds that 
\begin{equation*}
M_{k,m,n}(t) \leq \left( E[\hat \Lambda_k^{2m}(t)] \hat m_{k,2n}(t) \right)^{1/2}  < C(t),
\end{equation*}
where the constant $C(t) > 0$ is independent of $k$. Note moreover that, since $t \mapsto N_k(t) - \Lambda_k(t)$ is a martingale, it holds that
\begin{align*}
&E[ \sum_{0 \leq s \leq t} \hat \Lambda_k^m(s)\left(\hat \lambda_k^n(s) - \hat \lambda_k^n(s-)\right)] \\ 
\ &= E[ \int_0^t \hat \Lambda_k^m(s)\left( \int \left( \hat \lambda_k(s-) +  a_k\beta_k x \right)^n\nu_k(dx) - \hat \lambda_k^n(s-)\right)dN_k(s)] \\
&= E[ \int_0^t \hat \Lambda_k^m(s) \sum_{j=0}^{n-1} {n \choose j} \hat \lambda_k^j(s-) a_k^{n-j}\beta_k^{n-j} E[X_k^{n-j}]\lambda_{k}(s) ds] \\
&= \sum_{j=0}^{n-1} {n \choose j} a_k^{n-j-1}\beta_k^{n-j} E[X_k^{n-j}]\int_0^tE[\hat \Lambda_k^m(s)\hat \lambda_k^{j+1}(s) ] ds,
\end{align*}
where we have applied the binomial theorem and Fubini's theorem. Hence, we may apply the dominated convergence theorem, together with Assumption~\ref{ass:stability} to conclude that
\begin{align*}
&\lim_{k \to \infty} M_{k,m,n}(t) = \int_0^t m\lim_{k \to \infty}  M_{k,m-1,n+1}(s) ds + \int_0^t n c_0 \lim_{k \to \infty}  M_{k,m,n-1}(s) ds \\
& \ -  \int_0^t nc_1 \lim_{k \to \infty}  M_{k,m,n}(s) ds + \int_0^t c_2\frac{n(n-1)}{2}\lim_{k \to \infty} M_{k,m,n-1}(s)ds.
\end{align*}
An application of the fundamental theorem of the calculus thus yields that for each $m,n \geq 0$ the limit 
$\lim_{k \to \infty} M_{k,m,n}(t)$ solves the sytem of ODE's given by 
$$
y'_{m,n} = -nc_1 y_{m,n}  + (nc_0 + \frac{n(n-1)}{2}c_2)y_{m,n-1} + m y_{m-1,n+1}, 
$$
with $y_{m,n}(0)=0$. By applying It\^o's formula to $t \mapsto (\int_0^t Y(s)ds)^m Y^n(t)$ one can moreover show that the moments  $t \mapsto E[ (\int_0^t Y(s)ds)^m Y^n(t)]$ verify the same system of ODE's. Hence, since the Laplace transform of $Z(t) = \int_0^t Y(s)ds$ is known in closed form (see \cite{CIR}) and is finite in a radius around zero as noted by Dufrense~\cite{Dufresne}. Hence (according to section 30 in Billingsley~\cite{Billingsley}), the moments of $Z(t)$ determine its distribution and $\hat \Lambda_k(t) \to Z(t)$ as $k \to \infty$ in distribution. 
\end{proof}

%
%


\begin{example}(Gamma density)
Suppose that $dF(x) = f(x)dx$, where $f(x)$ is the PDF of a gamma distribution:
\begin{equation}\label{density:gamma}
f(x) = \frac{u^v}{\Gamma(v)}x^{v-1}\e^{-ux}, 
\end{equation}
where $u > 0$ and $v > 0$ are constants. By letting $v \downarrow 0$, the jumps become smaller and smaller, and if the parameters of the intensity process are adjusted simultaneously so that the intensity becomes higher and higher, the self-exciting process can be made into an infinite activity process. 

To that end, suppose that $\{v_k\}_{k \geq 1} $ is a sequence such that $v_k > 0$ for all $k \geq 1$, and $v_k \to 0$ as $k \to \infty$, and let $a_k := 1/\Gamma(v_k)$, for $k \geq 1$. Furthermore, suppose that for each $k \geq 1$, the stochastic intensity, $\lambda_k(t)$, defined in \eqref{lam_k}, has   
$$
\lambda_{0k} = c_0 \sqrt{\frac{\Gamma(v_k)(1+v_k)}{v_k}}, \ \ \beta_k = \sqrt{\frac{\Gamma(v_k)}{v_k(1+v_k)}}
$$
and that the drift rate is linear, 
$$
\mu_k(\lambda) = \left(\beta_k E[X_k] + c_1\right)\left(\lambda_{0k} - \lambda \right), 
$$
where $c_0,c_1 > 0$ are constants (and $X_k$ is gamma distributed with density \eqref{density:gamma} and $v = v_k$). Then, since the moments of the gamma distribution are given by 
$$
E[X_k^j] = \frac{v_k(v_k+1)\cdots (v_k+j-1)}{u^j},
$$
it follows that 
$$
\mu_k(\lambda) + \beta_kE[X_k]\lambda = (\beta_kE[X_k] + c_1)\lambda_{0k} - c_1\lambda,
$$
and clearly it holds that $a_k(\beta_kE[X_k] + c_1)\lambda_{0k} \to c_0/u$ as $k \to \infty$. Finally note that 
$$
\lim_{k \to \infty} a_k^{j-1}\beta_k^jE[X_k^j] = \begin{cases}
u^{-2} & \text{if } j=2 \\  
0 & \text{if } j > 2
\end{cases}
$$ 
So, according to Theorem \ref{thm:lam_convergence} we may conclude that the scaled intensity process tends to a process $Y(t)$ in distibution, where the dynamics of $Y(t)$ are given by the SDE
\begin{equation*}
dY(t) = (c_0 - \frac{c_1}{u}Y(t))dt + \frac{1}{u}\sqrt{Y(t)}dB(t),
\end{equation*} 
and $Y(0) = 0$. 
\end{example}

\section{A particular case: Linear intensity process}
\label{sec: linear}

We now consider the special case where the intensity process, $\lambda(t)$, is linear. That is,

\begin{equation}
\label{eq: linear_intensity}
d\lambda(t) = \alpha (\lambda_0 - \lambda(t))dt + \beta dU(t).
\end{equation}
In this section, we will study the expected value and variance of the intensity process in this special case. In this section we also assume that the jump-size distribution associated to each event is independent of the current value of the intensity process.

\subsection{The expected value of the intensity in the linear case}
\label{sec: expectation}

Let $m(t)$ denote the expected value of the intensity process, viewed as a function of time, so

\begin{equation}
m(t) := E[\lambda(t)].
\end{equation}
Define $\rho:= \beta E[X] - \alpha$. Eyjolfsson and Tj{\o}stheim \cite{EyjolfssonTjostheim} (see their equation (13)), use Dynkin's formula and Fubini's theorem to derive that

\begin{equation}
\label{eq: ode}
\begin{array}{lll}
m'(t) &=& \alpha \lambda_0 + \rho m(t) \\[\smallskipamount]
m(0) &:=& \lambda.
\end{array}
\end{equation}
Equation \eqref{eq: ode} is an ordinary differential equation (ODE) with solution

\begin{equation}
\label{eq: explicit_ode_expectation}
m(t) = m_0 + m_1e^{\rho t},
\end{equation}
where $m_0= -\alpha \lambda_0\rho^{-1}$ and $m_1 = \alpha \lambda_0\rho^{-1} + \lambda$. See Eyjolfsson and Tj{\o}stheim \cite{EyjolfssonTjostheim} for the details of this derivation, or the following Section \ref{sec: variance}, for how to apply Dynkin's formula to obtain the ODE. 

Based on the ordinary differential equation \eqref{eq: ode}, we separate the long term behaviour of the expected value, $m(t)$, into three cases:

\begin{itemize}
\item{If $\rho > 0$: In this case, $E[\lambda(t)]$ grows exponentially with time.}

\item{If $\rho = 0$: In this case, from \eqref{eq: ode} we get $m(t) = \lambda +\alpha \lambda_0 t$, so $E[\lambda(t)]$ grows linearly with time.}

\item{If $\rho < 0$: In this case, $E[\lambda(t)]$ is bounded and $E[\lambda(t)] \to -\alpha \lambda_0\rho^{-1}$ as $t \to \infty$.}
\end{itemize}

Recall that $\rho:= \beta E[X] - \alpha$. Hence, $\rho > 0$ means that $E[X]> \frac{\alpha}{\beta}$, where $\alpha$ is the drift term and $\beta$ is the diffusion term for the intensity SDE. Hence, the interpretation is that if the jump sizes are independent and identically distributed, and the expected jump size is larger than the fraction of the drift term over the diffusion term, then the expected intensity rate will grow exponentially with time. Similarly, $\rho=0$ means that $E[X]= \frac{\alpha}{\beta}$. Hence, if the expected jump size is in perfect balance with the fraction of the drift term over the diffusion term, then the expected intensity will grow linearly with time. Finally, $\rho < 0$ means that $E[X]< \frac{\alpha}{\beta}$. So, if the expected jump size is smaller than the fraction of the drift term over the diffusion term, then the expected intensity is bounded (and we know what it converges to).


\subsection{The variance of the intensity in the linear case}
\label{sec: variance}

Let $v(t)$ denote the second order moment of the intensity process, viewed as a function of time, so
\begin{equation}
v(t) := E[\lambda^2(t)].
\end{equation}
To determine the second moment, we use the same idea as in Section \ref{sec: expectation}, and as in Eyjolfsson and Tj{\o}stheim \cite{EyjolfssonTjostheim}: We use Dynkin's formula and Fubini's theorem to derive an ordinary differential equation for $v(t)$. From Dynkin's formula with $f(x)=x^2$,
\begin{equation}
\label{eq: dynkin}
E[\lambda^2(t)] = \lambda^2 + E[\int_0^t \mathcal{A} f(\lambda(r))dr]
\end{equation}
\noindent where $\mathcal{A}(\cdot)$ is the infinitesimal (or extended) generator of $\lambda(t)$. 
Note that according to \eqref{def:moment} it holds that

\[
\begin{array}{llll}
(\mathcal{A}f)(\lambda) &=& 2\lambda \alpha (\lambda_0 - \lambda) + \lambda \int ((\lambda + \beta x)^2 - \lambda^2) \nu(\lambda, dx) \\[\smallskipamount]
&=& \lambda (2 \alpha \lambda_0 + \beta^2 E[X^2]) + \lambda^2 (2 \beta E[X] - 2\alpha) \\[\smallskipamount]
&=& A \lambda + 2\rho \lambda^2,
\end{array}
\]
\noindent where we define $A := 2 \alpha \lambda_0 + \beta^2 E[X^2]$ and $\rho = \beta E[X] - \alpha$ as before.
By inserting this into the application of Dynkin's formula above in equation \eqref{eq: dynkin}, and using Fubini's theorem to change the order of integration and expectation, we find
\[
\begin{array}{llll}
v(t) &=& \lambda^2 + \int_0^t E[A \lambda(r) + 2\rho \lambda(r)^2]dr \\[\smallskipamount]
&=& \lambda^2 + \int_0^t (Am(r) + 2\rho v(r)) dr.
\end{array}
\]
Hence, 
\begin{equation}
\label{eq: ode_variance}
\begin{array}{llll}
v'(t) &=&  Am(t) + 2\rho v(t) \\[\smallskipamount]
v(0) &=& \lambda^2.
\end{array}
\end{equation}
Recall that we have an explicit expression for $m(t)$ from equation \eqref{eq: explicit_ode_expectation}. Hence, equation \eqref{eq: ode_variance} is an ordinary differential equation in $v(t)$ which can be solved by standard techniques by inserting the expression for $m(t)$ from \eqref{eq: explicit_ode_expectation}.

We now consider the same three cases as in Section \ref{sec: expectation}
\begin{itemize}
\item{If $\rho > 0$: By observing the ODE \eqref{eq: ode_variance}, we see (as in Section \ref{sec: expectation}) that the intensity process may explode in finite time.}
\item{If $\rho = 0$: In this case, 
$
v'(t) = A \lambda, 
$
so $v(t) = \lambda^2 e^{\frac{A}{\lambda}t}$. Hence,
\[
\begin{array}{llll}
Var(\lambda(t)) &=& E[\lambda(t)^2]- (E[\lambda(t)])^2 \\[\smallskipamount]
&=& v(t) - m(t)^2 \\[\smallskipamount]
&=& \lambda^2 (e^{\frac{A}{\lambda}t} -1).
\end{array}
\]
So when $t \rightarrow \infty$, $Var(\lambda(t)) \rightarrow \infty$ as well, since $A > 0$ and $\lambda > 0$.
}
\item{If $\rho < 0$: In this case, we see from the expression for $m(t)$ in equation \eqref{eq: explicit_ode_expectation} that $m(t) \rightarrow -\alpha \lambda_0\rho^{-1}$ (a positive constant) as $t \rightarrow \infty$. So, since $\rho < 0$, we see from the ODE \eqref{eq: ode_variance} that $v'(t) < 0$ for a sufficiently large $v(t)$. That is, for a second order moment, the derivative of this moment becomes negative. This will stabilise the intensity process. Hence, for $\rho < 0$, the intensity process is stable and will not explode in finite time. Note that in the case where $v(t)$ is small, the process is already stable, so the sign of the derivative $v'(t)$ is not important.
}
\end{itemize}

The interpretations of these items are similar to those of Section \ref{sec: expectation}: Since $\rho:= \beta E[X] - \alpha$, $\rho > 0$ means that $E[X]> \frac{\alpha}{\beta}$, where $\alpha$ is the drift term and $\beta$ is the diffusion term for the intensity SDE. Hence, the interpretation is that if the jump sizes are independent and identically distributed, and the expected jump size is larger than the fraction of the drift term over the diffusion term, then the intensity process may explode in finite time. Similarly, $\rho=0$ means that $E[X]= \frac{\alpha}{\beta}$. Hence, even when the expected jump size is in perfect balance with the fraction of the drift term over the diffusion term, the intensity process can explode as time goes to infinity. However, it will not explode in finite time. Finally, $\rho < 0$ means that $E[X]< \frac{\alpha}{\beta}$. So, if the expected jump size is smaller than the fraction of the drift term over the diffusion term, the intensity process is stable, in the sense that it will not explode in finite time.
%
%
%
\subsection{The second moment of the integrated intensity in the linear case}\label{sec:cov}
The second moment differential equation \eqref{eq: ode_variance} has the solution
\begin{align}\label{eq:vt}
v(t) = v_0 + v_1\e^{\rho t} + v_2\e^{2\rho t},
\end{align}
where $v_0 = A\alpha\lambda_0(2\rho^2)^{-1}$, $v_1 = -A\rho^{-1}(\alpha\lambda_0\rho^{-1} + \lambda) $, and $v_2 = A(2\rho^2)^{-1}(\alpha\lambda_0 + 2\rho\lambda) + \lambda^2$ are constants. We note that the preceding observations are consistent with the properties of the solution that were observed in Section \ref{sec: variance}. This solution can in turn be used to determine the second moment of the integrated intensity, $\Lambda(t)$. Note that by an application of Fubini it holds that
$$
E[\Lambda^2(t)] = \int_0^t\int_0^t E[\lambda(r)\lambda(s)] drds.
$$
Suppose that $s \geq r$. Then, it holds that 
\begin{align*}
E[\lambda(r)\lambda(s)] = v(r) + E[\lambda(r)(\lambda(s) - \lambda(r)],
\end{align*}
where $v(t) = E[\lambda^2(t)]$. From what we know about the first moment of a linear intensity, given an initial value, 
\[
\begin{array}{lllll}
E[\lambda(r)(\lambda(s) - \lambda(r)] &=& E[\lambda(r)E[(\lambda(s) - \lambda(r)|\lambda(r)]] \\[\smallskipamount]
&=& E[\lambda(r)\left((\frac{\alpha \lambda_0}{\rho} + \lambda(r))\e^{\rho(s-r)} -\frac{\alpha \lambda_0}{\rho} - \lambda(r)\right)] \\[\smallskipamount]
&=& \left(\frac{\alpha \lambda_0}{\rho}m(r) + v(r)\right)\e^{\rho(s-r)} - \left( \frac{\alpha \lambda_0}{\rho}m(r) + v(r)\right).
\end{array}
\]
To simplify notation, suppose that $\phi(r) := \frac{\alpha \lambda_0}{\rho}m(r) + v(r)$, for any $r > 0$, then it follows that
\begin{align*}
E[\Lambda^2(t)] &= \int_0^t\left(\int_0^s (v(r) + \phi(r)(\e^{\rho(s-r)}-1))dr + \int_s^t (v(s) + \phi(s)(\e^{\rho(r-s)}-1))dr \right)ds \\
&= 2\int_0^t \int_s^t (v(s) + \phi(s)(\e^{\rho(r-s)}-1))dr ds \\
&= 2\int_0^t \left((t-s)(v(s) - \phi(s)) + \frac{\e^{\rho(t-s)}-1}{\rho}\phi(s)\right)ds 
\end{align*}
By writing $m(s) = m_0 + m_1\e^{\rho s}$, like we do in \eqref{eq: explicit_ode_expectation}, it holds that  
\begin{align*}
\int_0^t (t-s)(v(s) - \phi(s))ds &= m_0 \int_0^t (t-s)(m_0 + m_1\e^{\rho s}) ds \\
&= m_0\left(\frac{m_0}{2}t^2 + m_1\frac{\e^{\rho t}-1-\rho t}{\rho^2} \right).
\end{align*}
Similarly, there exist constants $c_0$, $c_1$ and $c_2$ such that $\phi(s) = c_0 + c_1\e^{\rho s} + c_2\e^{2\rho s}$, so
\[
\begin{array}{llll}
\int_0^t \frac{\e^{\rho(t-s)}-1}{\rho}\phi(s)ds &=& \int_0^t \frac{\e^{\rho(t-s)}-1}{\rho} \left( c_0 + c_1\e^{\rho s} + c_2\e^{2\rho s}\right)ds \\[\smallskipamount]
&=& \frac{1}{\rho}\int_0^t \left(c_0 \e^{\rho t}\e^{-\rho s} + (c_1\e^{t\rho} - c_0) + (c_2\e^{\rho t} - c_1)\e^{\rho s} - c_2\e^{2\rho s}\right)ds \\[\smallskipamount]
&=&\frac{1}{\rho} \left(c_0\frac{\e^{\rho t}-1}{\rho} + (c_1\e^{t\rho} - c_0)t + (c_2\e^{\rho t} - c_1)\frac{\e^{\rho t} - 1}{\rho} -c_2 \frac{\e^{2\rho t} - 1}{2\rho}\right).
\end{array}
\]

Therefore, using that $c_0=v_0-m_0^2$, $c_1=v_1-m_0m_1$ and $c_2=v_2$ where $m_0, m_1, v_0, v_1, v_2$ are the constants in the first and second moment functions, \eqref{eq: explicit_ode_expectation} and \eqref{eq:vt}, respectively, we may conclude that 
\begin{align*}
E[\Lambda^2(t)] = k_0 + k_1t + k_2t^2 + (C_0+C_{1}t)e^{\rho t} + C_2e^{2\rho t},
\end{align*}
where the constants $k_0, k_1, k_2, C_0, C_1, C_2$ are given by
\[
\begin{array}{lllll}
k_0 &=& \frac{2m_0(m_0 - 2m_1) - 2v_0 + 2v_1 + v_2}{\rho^2}
\\[\medskipamount]

k_1 &=& \frac{2m_0(m_0-\rho^2 m_1) - 2v_0}{\rho} \\[\medskipamount]

k_2 &=& m_0^2 \\[\medskipamount]

C_0 &=& \frac{2(m_0(2m_1-m_0) + v_0 - v_1 - v_2)}{\rho^2} \\[\medskipamount]

C_1 &=& \frac{2(v_1-m_0m_1)}{\rho} \\[\medskipamount]
C_2 &=& \frac{v_2}{\rho^2},
\end{array}
\]
and $m_0, m_1, v_0, v_1, v_2$ are the constants in the first and second moment functions, \eqref{eq: explicit_ode_expectation} and \eqref{eq:vt}, respectively.
It follows that if $\rho < 0$ is close to zero, then the effects of a jump fade out slower, than if the $\rho < 0$ is further away from zero.
\subsection{Convergence to deterministic intensity}

In this subsection, we will study what happens to the intensity process $\lambda(t)$ if $\beta>0$ and $\alpha>0$ both converge towards zero while $\rho$ is kept constant. Note that according to the definition of $\rho$
\[
\begin{array}{llll}
d \lambda(t) &=& -\alpha \lambda(t)dt + \alpha \lambda_0 dt + \beta dU(t) \\[\smallskipamount]
&=& \rho \lambda(t)dt + \alpha \lambda_0 dt + \beta d(U(t) - E[X]t) \rightarrow \rho \lambda(t)dt
\end{array}
\]
as $\alpha, \beta \rightarrow 0$ while $\rho$ is kept constant.
This means that the stochastic differential equation which determined the intensity process converges towards an ordinary (deterministic) differential equation as $\alpha, \beta \rightarrow 0$ while $\rho$ is kept constant. This ODE is
$
\lambda' = \rho \lambda, 
$
which means that $\lambda(t) = \lambda e^{\rho t}$. From this, it follows that the self-exciting process $U(t)$ converges to a non-homogeneous Poisson process with intensity process $\lambda(t) = \lambda e^{\rho t}$. Hence, if $\rho < 0$, then the intensity converges to zero, and no more jumps occur. If $\rho=0$, then the intensity converges to a constant $\lambda(t) = \lambda$, and if $\rho > 0$, then the intensity tends to infinity as $t \to \infty$.

\section{Conclusions and future work}\label{sec:Conclusion}

To conclude, the purpose of this paper has been to investigate properties of self-exciting processes with intensity processes given by an SDE. The following are the main contributions of the paper. We have:

\begin{itemize}
\item{Proved that the scaling limit of the intensity process equals the square root process in distribution. We have also proved a similar result for the integrated intensity process.}
\item{Derived explicit expressions for the expectation and variance of the intensity in the case of an intensity given by a linear SDE.}
\item{Shown that the intensity process may explode in finite time, or be stable (in the sense that it does not explode in finite time). Hence, SDE driven self-exciting stochastic processes may have both finite and infinite activity.}
\item{Proved that the intensity process converges to a deterministic intensity as the drift and diffusion coefficients go to zero, as long at the expected jump size equals the fraction of the drift and diffusion terms.}
\end{itemize} 

There is still much to be done in investigating SDE driven self-exciting processes. Work in progress is finding the moments of $U(t)$ and deriving scaling limit results. Furthermore, applications, for instance looking into stochastic optimal control problems where the state process is given by an SDE driven self-exciting process would be interesting. This is left for future research.

\section*{Funding}
This work was supported by the Research Council of Norway under the SCROLLER project, project number 299897.

\end{document}